\newtheorem{theorem}{Theorem}[section]
\newtheorem{lemma}[theorem]{Lemma}
\newtheorem{corollary}[theorem]{Corollary}
\newtheorem{definition}[theorem]{Definition}
\newtheorem{note}[theorem]{Note}
\DeclareMathOperator{\osc}{osc}
\newcommand{\R}{{\mathbb R}}
\newcommand{\pvar}{p{\mathrm{-var}}}
\newcommand{\var}{\mathrm{-var}}
\title{Gamma Hedging without Rough Paths}
\author{John Armstrong\thanks{Email: john.armstrong@kcl.ac.uk}} \author{
John Armstrong\thanks{Email: \href{mailto:john.armstrong@kcl.ac.uk}{john.armstrong@kcl.ac.uk}}
\and
Purba Das\thanks{Email: \href{mailto:purba.das@kcl.ac.uk}{purba.das@kcl.ac.uk}}
}
\date{Department of Mathematics, King's College London}
\begin{document}

\maketitle
\begin{abstract}
We show how the robustness of gamma hedging can be understood without using rough-path theory. Instead, we use the concepts of $p^{th}$ variation along a partition sequence and Taylor's theorem directly, rather than defining an integral and proving a version of It\^o's lemma. The same approach allows classical results on delta-hedging to be proved without defining an integral and without the need to define the concept of self-financing in continuous time. We show that the approach can also be applied to barrier options
and Asian options.
\end{abstract}
\noindent\textbf{Keywords:} Taylor expansion, Quadratic variation, F\"ollmer  integral, Rough path, Asian option, Quadratic variation, Black–Scholes model
\section*{Introduction}

The classical delta-hedging strategy enables European options to be replicated under the assumption that the underlying asset follows a given probability model. In \cite{armstrongIonescu} it is shown using rough-path theory that the discrete-time gamma-hedging strategy allows European options to be replicated with arbitrary accuracy on the assumption that the underlying has finite $p$-variation  for $p<3$ and that there exist hedging options whose prices are given by a function which satisfies the Black--Scholes partial differential equation. This result does not require any probabilistic model for the underlying asset.

This paper gives a more elementary
account of the success of gamma hedging that eliminates the need to use rough-path integration theory. This dramatically simplifies the exposition to a single application of Taylor's theorem.

The central trick in the argument is to express the profit and loss of the discrete-time hedging strategy in a single expression, which then tends to zero. If one instead tries to show that sub-expressions are equal and opposite, as is natural when trying to show that one strategy replicates a payoff, the cancellations become obscure. Moreover, the individual terms do not necessarily converge unless they are correctly renormalised, and it is this issue which introduces the complexities into rough-path theory. There is a parallel notion of outer measure, originally introduced by Vovk to develop a model-free approach for optimal hedging strategies \cite{lochowski2018}.

The same trick can also be applied to classical delta hedging, where one obtains a proof of the almost-sure convergence of the discrete-time delta-hedging strategy without needing to introduce the It\^o integral. Instead, one uses F\"ollmer's notion of quadratic variation along a partition sequence \cite{follmer, cont2010}  (See e.g. \cite{schied2016}).
This is a much easier result to interpret than the textbook argument that shows a continuous-time self-financing strategy can be used to replicate an option, as one does not need to introduce the abstraction of continuous-time self-financing strategies (see e.g. \cite[Section 2]{bickWillinger} and \cite{chiu2023} for a pathwise approach to self-financing strategies). 
F\"ollmer's pathwise approach has been applied to hedging, in particular to delta hedging,  by multiple articles before \cite{bickWillinger,schied2016,chiu2023,riga2016,tikanmaki2013} so this result is not particularly new. However, we believe our proof is particularly clean, does not require defining (pathwise) stochastic integrals or self-financing strategies, and does not use a version of It\^o formula. We would like to stress that just using a vector-valued 1-dimensional Taylor expansion, our result generalises the delta-hedging results using F\"ollmer's approach to the gamma-hedging setup without relying on Rough-path theory/F\"ollmer-It\^o formula.

If one models a stock price as the exponential of drifted Brownian motion, one can describe the stock price trajectories in the Black--Scholes model without needing to introduce stochastic differential equations. This makes
it possible to explain the theory of replication in the Black--Scholes model using only the existence of Brownian motion and the fact that it almost surely has finite quadratic variation.

We will focus on the case of European options, but our results can be trivially extended to barrier options. We give an example to show how our results
can be applied to Asian options and similar exotics.

\section{Pathwise convergence}

In this section, we give a short proof of the convergence of the gamma-hedging strategy for European options without using tools from rough-path theory. For simplicity, we assume the risk-free rate is $r=0$. The general case can be deduced from this by a change of num\'eraire.

Let $\pi = (\pi_N)_{N=1}^\infty$ be a sequence of partitions of
$[0,T]$ with maximum step size, denoted $\|\pi^N\|$, going to zero. So $\pi_N = \{ t_0^N, \ldots, t_{C(\pi^N)}^N \}$
with $t_0^N = 0 < \ldots < t_k^N < \ldots < t_{C(\pi^N)}^N = T$
and $\|\pi^N\|:=\sup_{1 \leq i \leq C(\pi^N)} ( t_i^N-t^N_{i-1} )$, where $C(\pi^N)$ represents the total number of partition points of $\pi_N$.
In our next definition, we write $[u, v] \in \pi_N$ to indicate that $u$ and $v$ are both in $\pi_n$ and are immediate successors. We will use this convention throughout.

\begin{definition}
Let $p \in {\mathbb R}_{\geq{1}}$ and ${\cal V}$ be a real normed vector space. A continuous path $X \in C_0([0,T], {\cal V})$ is said to have {\em vanishing $p$\textsuperscript{th}-variation} along a sequence of partitions $\pi = (\pi_N)_{N \geq 1}$ if
\[
\lim_{N \to \infty} \sum_{[u,v] \in \pi_N} \|X_v-X_u\|^p =0.
\]
\end{definition}

The term ``vanishing $p$\textsuperscript{th}-variation'' is designed
to be consistent with the use of the term $p$\textsuperscript{th}-variation in \cite{cont}. However, one should be careful not to confuse this with the very similarly named concept of $p$-variation defined for ease of comparison below.  

\begin{definition}
Let $p \in {\mathbb R}_{\geq{1}}$ and ${\cal V}$ be a real normed vector space. Given a path $X \in C([0,T],{\cal V})$ we define
the $p$-variation of $X$ to equal
\[
\| X \|_{\pvar} = \left( \sup_{\pi \in {\cal P}[0,T]} \sum_{[u,v] \in \pi} \|X_v-X_u\|^p \right)^{\frac{1}{p}}.
\]
where ${\cal P}[0,T]$ is the set
of all partitions of $[0,T]$.
\end{definition}

Notice that the concept of $p$-variation is a supremum over all partitions, whereas when discussing $p$\textsuperscript{th}-variation it is always computed along a given sequence $\pi$. Keeping this in mind helps one distinguish these similarly named concepts more easily. We will write $\| X \|_{\pvar;[s,t]}$ for the $p$-variation defined over other intervals $[s,t]$ in a similar way.

Let us describe the multi-dimensional calculus notation we will be using. Given a finite-dimensional real normed vector space ${\cal V}$, a second vector space ${\cal W}$ and a function $F:{\cal V} \to {\cal W}$, we will write $\nabla F: {\cal V} \to {\cal V} \otimes {\cal W} $ for the gradient of $F$. Given $X \in {\cal V}$, we write $\nabla_X F := \langle \nabla F, X\rangle_{\cal V}$ where $\langle \cdot, \cdot \rangle_{\cal V}$ is the inner product on ${\cal V}$. When ${\cal V} = {\cal V}^1 \oplus {\cal V}^2$ we write ${\nabla^1} F$ and $\nabla^2 F$ for the components of $\nabla F$ in ${\cal V}^1$ and ${\cal V}^2$ respectively, and define $\nabla^i_X := \langle \nabla^i F, X \rangle_{\cal V}$ for $X \in {\cal V}$.

\begin{theorem}
\label{thm:mainTheorem}
Let $\pi$ be a sequence of partitions of $[0,T]$ with mesh tending to zero.

For each $\alpha \in \{1,2\}$, let ${\cal V}^\alpha$ be a pair of finite-dimensional normed vector spaces and let $X^\alpha \in C([0,T];U)$ be a path with vanishing $p_\alpha$\textsuperscript{th} variation along $\pi$. Suppose $1 \leq p_1\leq2$, $1 \leq p_2\leq3$ and
$\frac{1}{p_1}+\frac{1}{p_2} \geq 1$.

For indices $i \in \{0,1, \ldots,  n\}$, 
Let $F^i: {\cal V}^1 \times {\cal V}^2 \to \R$ be functions which are three-times continuously differentiable functions in a neighbourhood of the path  $\{ (X^1_t, X^2_t): t \in (0,t) \}$. Let $q^i$ ($i=0,\ldots, n$) be a bounded function $q^i:[0,T]\to \R$.

Suppose that at all times $t \in [0,T)$
\begin{align}
&\sum_{i=0}^n q^i_t \nabla_V^\alpha F^i (X^1_t,X^2_t)  = 0, \quad \forall\, \alpha \in \{1,2\}, \, V \in {\cal V}^\alpha_t
\label{eq:firstOrder}\\
&\sum_{i=0}^n q^i_t \nabla^2_{V_1} \nabla^2_{V_2}  F^i (X^1_t,X^2_t)  = 0,  \, \quad \forall\, \, V_1, V_2 \in {\cal V}^2_t,
\label{eq:secondOrder}
\end{align}
then
\begin{equation}
\lim_{N\to \infty} \sum_{[u,v] \in \pi_N} \sum_{i=0}^n  q^i_u( F^i(X^1(v), X^2(v))-F^i(X^1(u), X^2(u))) = 0.
\end{equation}
\label{eq:vanishingPnL}
\end{theorem}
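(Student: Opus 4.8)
The plan is to apply Taylor's theorem to each increment and then use the two algebraic hypotheses to cancel exactly those terms that do not vanish on their own. Writing $X_t=(X^1_t,X^2_t)$ and abbreviating the increments over an edge $[u,v]\in\pi_N$ by $\Delta^1=X^1_v-X^1_u$ and $\Delta^2=X^2_v-X^2_u$, Taylor's theorem to second order with third-order remainder gives, for each $i$,
\[
F^i(X_v)-F^i(X_u)=\sum_{\alpha\in\{1,2\}}\nabla^\alpha_{\Delta^\alpha}F^i(X_u)+\tfrac12\sum_{\alpha,\beta\in\{1,2\}}\nabla^\alpha_{\Delta^\alpha}\nabla^\beta_{\Delta^\beta}F^i(X_u)+R^i_{u,v},
\]
where $R^i_{u,v}$ collects third-order directional derivatives evaluated at an intermediate point of the segment from $X_u$ to $X_v$. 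Since the path is the continuous image of the compact interval $[0,T]$ and each $F^i$ is $C^3$ on a neighbourhood of it, the derivatives of $F^i$ up to order three are uniformly bounded on a compact neighbourhood $K$ of the path, and for $N$ large every such segment lies in $K$. Combined with the boundedness of the finitely many $q^i$, this produces a single constant $C$ dominating $\sum_i|q^i_u|$ times any of the relevant derivative norms.

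Next I would substitute this expansion into the target sum and push the $q^i_u$-weighting through. By \eqref{eq:firstOrder}, applied with $V=\Delta^\alpha$ for each $\alpha$, the entire first-order contribution $\sum_i q^i_u\sum_\alpha\nabla^\alpha_{\Delta^\alpha}F^i(X_u)$ vanishes edge-by-edge. Expanding the Hessian into its three blocks, \eqref{eq:secondOrder}, applied with $V_1=V_2=\Delta^2$, annihilates the pure ${\cal V}^2$ second-order term $\sum_i q^i_u\nabla^2_{\Delta^2}\nabla^2_{\Delta^2}F^i(X_u)$, again edge-by-edge. What survives inside the sum over $\pi_N$ is therefore only the pure ${\cal V}^1$ second-order term, of size $O(\|\Delta^1\|^2)$, the symmetric cross term, of size $O(\|\Delta^1\|\,\|\Delta^2\|)$, and the third-order remainder.

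It then remains to show each surviving sum tends to zero, which rests on two workhorse estimates. First, (A) $\sum_{[u,v]\in\pi_N}\|\Delta^1\|^2\to0$, which follows from $\|\Delta^1\|^2\le(\max_{[u,v]}\|\Delta^1\|)^{2-p_1}\|\Delta^1\|^{p_1}$ together with the vanishing $p_1$\textsuperscript{th}-variation of $X^1$ (so $\sum\|\Delta^1\|^{p_1}\to0$) and uniform continuity (so the maximal increment tends to zero). Second, (B) $\sum_{[u,v]\in\pi_N}\|\Delta^1\|\,\|\Delta^2\|\to0$, obtained from Hölder's inequality with conjugate exponents $p_1$ and $p_1/(p_1-1)$,
\[
\sum_{[u,v]\in\pi_N}\|\Delta^1\|\,\|\Delta^2\|\le\Big(\sum\|\Delta^1\|^{p_1}\Big)^{1/p_1}\Big(\sum\|\Delta^2\|^{\,p_1/(p_1-1)}\Big)^{(p_1-1)/p_1},
\]
where the constraint $\tfrac1{p_1}+\tfrac1{p_2}\ge1$ forces $p_1/(p_1-1)\ge p_2$, so the second factor tends to zero by the argument of (A) applied to $X^2$ (the degenerate case $p_1=1$ being handled by the obvious substitute $\sum\|\Delta^1\|\cdot\max_{[u,v]}\|\Delta^2\|$). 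Estimate (A) controls the pure ${\cal V}^1$ second-order term and (B) the cross term. Every third-order remainder term is of one of the forms $\|\Delta^1\|^3$, $\|\Delta^2\|^3$, $\|\Delta^1\|^2\|\Delta^2\|$, $\|\Delta^1\|\|\Delta^2\|^2$; the two pure cubes are handled as in (A) using $p_1\le2\le3$ and $p_2\le3$, while the two mixed cubes are dominated by $\max_{[u,v]}\|\Delta^2\|$ times (A) and by $\max_{[u,v]}\|\Delta^1\|$ times (B) respectively, hence also vanish.

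The main obstacle is conceptual as much as technical: the terms removed by the hypotheses are precisely those that are not negligible by themselves. The first-order (``delta'') terms are only $O(\|\Delta^\alpha\|)$, and the pure ${\cal V}^2$ quadratic term carries the sum $\sum\|\Delta^2\|^2$, which need not tend to zero when $p_2>2$---this is the quadratic-variation term that rough-path and F\"ollmer--It\^o theory must track explicitly. Hypotheses \eqref{eq:firstOrder} and \eqref{eq:secondOrder} annihilate exactly these, leaving only genuinely vanishing contributions, and the only delicate estimate is the cross term (B), where the exponent constraint is used in an essential way.
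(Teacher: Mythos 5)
Your proposal is correct and follows essentially the same route as the paper's proof: a single Taylor expansion with Lagrange-form third-order remainder, with hypotheses \eqref{eq:firstOrder} and \eqref{eq:secondOrder} annihilating exactly the first-order and pure second-order ${\cal V}^2$ terms edge-by-edge, and the surviving $(2,0)$, cross, and cubic terms killed by vanishing-variation bounds and H\"older's inequality with the exponent constraint $\frac{1}{p_1}+\frac{1}{p_2}\geq 1$. If anything, you are more explicit than the paper in two useful places: the interpolation estimate $\|\Delta\|^{r}\leq(\max\|\Delta\|)^{r-p}\|\Delta\|^{p}$ needed to pass from vanishing $p_\alpha$\textsuperscript{th} variation to the higher-order sums, and the degenerate case $p_1=1$ of the H\"older step.
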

\begin{proof}
We define an indexing set 
\[
{\cal I}=\{(a_1,a_2): a_1 \in \{0,1,2,3\}, a_2 \in \{0,1,2,3\}, 0<
a_1+a_2 \leq 3 \},
\]
which we will use to label terms in a Taylor expansion. For each interval $[u,v]$ in the partition $\pi_N$, we will perform a Taylor expansion of $F$ at $(X^1_u,X^2_u)$.  We will label the points at which the derivatives are calculated in this expansion by, $\xi^{u,N}_d$ where $d$ denotes the degree. Our notation allows us to write Taylor's theorem without needing a separate expression for the remainder.  Thus we define $\xi^{u,N}_d=(X^1_u,X^2_u)$ for $d < 3$.
Taylor's theorem with remainder then tells us that we can find $\lambda^{u,N} \in [0,1]$
such that if we define
\[
\xi^{u,N}_3 = (X^1_u,X^2_u) + \lambda^{u,N} (X^1_v-X^1_u, X^2_v-X^1_u)
\]
we will have
\begin{align}
\sum_{[u,v] \in \pi_N} &\sum_{i=0}^n  q^i_u( F^i(X^1(v), X^2(v))-F^i(X^1(u), X^2(u)))
= \nonumber \\
&\sum_{(a_1,a_2)\in {\cal I}} \sum_{[u,v] \in \pi_N} \sum_{i=0}^n
 \frac{1}{a_1! \,a_2!} q^i_u
(\nabla^1_{X^1_v-X^1_u})^{a_1} (\nabla^2_{X^2_v-X^2_u})^{a_2}  F^i )(\xi^{u,N}_{a_1+a_2}). 
\label{eq:taylorExpansion}
\end{align}
Note that we have not included the point $(0,0)$ in ${\cal I}$ to account for the cancellation that occurs in the zero-th order terms in this expansion.

We have the bound
\begin{align}
|(\nabla^1_{X^1_v-X^1_u})^{a_1}& (\nabla^2_{X^2_v-X^2_u})^{a_2}  F^i)(\xi^{u,N}_{a_1+a_2})| \nonumber \\ 
&\leq 
\|(\nabla^1)^{a_1} (\nabla^2)^{a_2}  F^i)(\xi^{u,N}_{a_1+a_2})\|\|X^1_v - X^1_u\|^{a_1} \|X^2_v - X^2_u\|^{a_2}.
\label{eq:derivEstimate}
\end{align}
The terms
\[
\|(\nabla^1)^{a_1} (\nabla^2)^{a_2}  F^i)(\xi^{u,N}_{a_1+a_2})  \|, \qquad \forall \; (a_1,a_2)\in \mathcal I
\]
are bounded by our assumption of continuity of all derivatives and the compactness of the interval.

We now see that for each $(a_1,a_2) \in {\cal I}$ the terms on the right of equation \eqref{eq:taylorExpansion} vanish in the limit due to our various assumptions. When $a_1+a_2=3$ the terms vanish in the limit due to the bounds on $q_i$, the bounds on the derivative terms, the estimate \eqref{eq:derivEstimate} and the fact that $(X^1,X^2)$ has vanishing $p$-th variation along $\pi$ for $p=3$. When $a_1=0$ and $a_2=2$, the term vanishes due to \eqref{eq:secondOrder}. When $a_1=1$ and $a_2=1$ the term vanishes in the limit as
\[
\lim_{N \to \infty} \sum_{[u,v] \in \pi_N} \|(X^1_v - X^1_u)\|\| X^2_v - X^2_u \| = 0
\]
by H\"older's inequality. When $a_1=2$ and $a_1=0$, the term vanishes as $X^1$ has vanishing $p$-th variation along $\pi$ for $p=2$. When $a_1=1$ and $a_2=0$ or $a_1=0$ and, $a_2=1$ the term vanishes by \eqref{eq:firstOrder}. This exhausts the possibilities for $(a_1,a_2) \in {\cal I}$.
\end{proof}

It follows that so long as a stock price path $X^2:[0,T]\to {\mathbb R}$ has finite quadratic variation along $\pi$, and so long as one can purchase appropriate hedging options at the Black--Scholes price, then it is possible to replicate a European option to arbitrary accuracy by using the gamma-hedging strategy. This is made precise
in the corollary below.
\begin{corollary}
\label{cor:gammaHedging}
Let $X^2 \in C([0,T]; \R_{\geq 0})$ be a path with vanishing $p$\textsuperscript{th} variation along a partition $\pi$ for $p\leq 3$. Let $\sigma>0$ be given and let the risk-free interest rate $r=0$. Let $f^i:[0,T]\to \R$ ($i=0,\ldots,n)$ be measurable functions and let $F^i$ be given by the prices of these derivatives in the Black-Scholes model, so $F^i(t,s):=BS(f^i, T, \sigma, t, s):= {\mathbb E}(\tilde{S}_T)$ where $S$ is the process on $[t,T]$ given by the SDE
\[
d S_u = \sigma S_u d {W}_u, \quad S(t)=s
\]
for a Brownian motion $W$. We assume the $F^i$ are finite.
If $q^i$ ($i=0,\ldots, n$) are bounded functions $q^i:[0,T]\to \R$ satisfying
\begin{equation}
\sum_{i=0}^n q^i_t \frac{\partial F^i}{\partial s} (t, X^2_t) = 0
\label{eq:deltaHedge}
\end{equation}
\begin{equation}
\sum_{i=0}^n q^i_t \frac{\partial^2 F^i}{\partial s^2} (t, X^2_t) = 0
\label{eq:gammaHedge}
\end{equation}
with $q^0=-1$ then
\begin{equation}
\label{eq:replicationPathwise}
f^0(X_T) = F^0(0,X^2(0)) + \lim_{N \to \infty} \sum_{[u,v]\in \pi_N} \sum_{i=1}^n  q^i_u( F^i(v,X^2_v)-F^i(u, X^2_u)).
\end{equation}
\end{corollary}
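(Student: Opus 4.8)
The plan is to read the corollary off from Theorem~\ref{thm:mainTheorem} applied with ${\cal V}^1 = \R$ the time axis and ${\cal V}^2 = \R$ the price axis, taking $X^1_t := t$ and $X^2$ the given price path, so that the joint path is $t \mapsto (t, X^2_t)$ and $F^i(t,s)$ plays the role of $F^i(X^1,X^2)$. First I would dispose of the variation hypotheses. The path $X^1_t = t$ has total variation $T$, so $\sum_{[u,v]\in\pi_N} |v-u|^{p_1} \le \|\pi_N\|^{\,p_1-1}\, T \to 0$ for every $p_1 > 1$, giving vanishing $p_1$\textsuperscript{th} variation. Moreover vanishing $p$\textsuperscript{th} variation of $X^2$ for some $p \le 3$ forces vanishing third variation, since $\sum_{[u,v]} \|X^2_v - X^2_u\|^3 \le (\sup_{[u,v]}\|X^2_v-X^2_u\|)^{3-p} \sum_{[u,v]} \|X^2_v-X^2_u\|^p$ and the supremum tends to zero by uniform continuity on $[0,T]$. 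Choosing $p_1 = \tfrac32$ and $p_2 = 3$ then meets all the constraints $1 \le p_1 \le 2$, $1 \le p_2 \le 3$ and $\tfrac{1}{p_1} + \tfrac{1}{p_2} = 1 \ge 1$.

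Next I would verify the regularity and the algebraic hypotheses \eqref{eq:firstOrder}--\eqref{eq:secondOrder}. The Black--Scholes price $F^i(t,s)$ is $C^\infty$ on $[0,T)\times\R_{>0}$, which contains the path for $t \in (0,T)$, so the differentiability requirement holds. In coordinates $\nabla^1_V F^i = V\,\partial_t F^i$, $\nabla^2_V F^i = V\,\partial_s F^i$ and $\nabla^2_{V_1}\nabla^2_{V_2}F^i = V_1 V_2\,\partial_{ss}F^i$, so \eqref{eq:secondOrder} is precisely the gamma condition \eqref{eq:gammaHedge} and the $\alpha = 2$ case of \eqref{eq:firstOrder} is precisely the delta condition \eqref{eq:deltaHedge}. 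The only hypothesis not assumed directly is the $\alpha = 1$ (time) case of \eqref{eq:firstOrder}, and this is where the Black--Scholes PDE enters: with $r = 0$ each price satisfies $\partial_t F^i + \tfrac12\sigma^2 s^2 \partial_{ss} F^i = 0$, so
\[
\sum_{i=0}^n q^i_t\, \partial_t F^i(t, X^2_t) = -\tfrac12 \sigma^2 (X^2_t)^2 \sum_{i=0}^n q^i_t\, \partial_{ss} F^i(t, X^2_t) = 0
\]
by \eqref{eq:gammaHedge}. Thus every hypothesis of Theorem~\ref{thm:mainTheorem} is in force.

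With the theorem available, I would split its conclusion \eqref{eq:vanishingPnL} into its $i = 0$ term and the rest. Because $q^0 \equiv -1$, the $i=0$ contribution telescopes for each $N$ to the $N$-independent quantity $-\bigl(F^0(T, X^2_T) - F^0(0, X^2_0)\bigr)$. Since the full sum tends to $0$, the remaining sum must converge to $F^0(T, X^2_T) - F^0(0, X^2_0)$; in particular the limit in \eqref{eq:replicationPathwise} exists. Using the terminal condition $F^0(T, s) = f^0(s)$ of the Black--Scholes price to identify $F^0(T, X^2_T) = f^0(X^2_T)$ then yields
\[
f^0(X^2_T) = F^0(0, X^2_0) + \lim_{N\to\infty} \sum_{[u,v]\in\pi_N}\sum_{i=1}^n q^i_u\bigl(F^i(v, X^2_v) - F^i(u, X^2_u)\bigr),
\]
which is \eqref{eq:replicationPathwise}.

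I expect the only real difficulty to lie at maturity $t = T$. Theorem~\ref{thm:mainTheorem} supplies smoothness only on the open time interval, whereas both the telescoping and the terminal identification invoke $F^0(T, \cdot)$; one must therefore justify that the price function extends continuously up to $t = T$ with boundary value $f^0$ at the point $X^2_T$, and that applying Taylor's theorem on the last interval $[t^N_{C(\pi^N)-1}, T]$ is legitimate even though the higher $s$-derivatives of $F^i$ may blow up as $t \uparrow T$. For continuous payoffs this is a standard consequence of the heat-kernel representation of the Black--Scholes price; for merely measurable $f^i$ one needs continuity of $f^i$ at $X^2_T$ (which holds off a negligible set of paths) or a localisation argument that stops just short of $T$.
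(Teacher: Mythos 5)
Your proposal is correct and follows essentially the same route as the paper's own proof: apply Theorem \ref{thm:mainTheorem} with $X^1_t = t$, use the Black--Scholes PDE together with the delta and gamma conditions \eqref{eq:deltaHedge} and \eqref{eq:gammaHedge} to verify hypotheses \eqref{eq:firstOrder} and \eqref{eq:secondOrder}, and collapse the telescoping $i=0$ sum using $q^0 = -1$. You merely make explicit what the paper compresses into ``dimension counting'' (your derivation of the time condition $\sum_i q^i_t \partial_t F^i = 0$ from the PDE plus the gamma condition) and into unstated exponent choices (your $p_1 = \tfrac{3}{2}$, $p_2 = 3$ with the upgrade to vanishing third variation), and your closing caveat about the degeneration of regularity as $t \uparrow T$ is a fair flag of a point the paper itself glosses over and only acknowledges in its later discussion of when $(f^0)'(X_T)$ and $(f^0)''(X_T)$ exist.
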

\begin{proof}
Define $X^1:[0,T]\to \R$ by $X^1_t=t$. It is immediate from the pricing kernel
formula for the Black--Scholes model and integration
by parts that all the $F^i$ are three-times continuously differentiable
for $t \in (0,T)$.
We may think of the Black--Scholes PDE and equations \eqref{eq:deltaHedge}, \eqref{eq:gammaHedge} as giving linear relationships between the partial derivatives of the $V^i$. Since the Black--Scholes PDE contains the partial derivative with respect to time and the other two equations do not, these equations are linearly independent.
Hence it is a matter of dimension counting to observe that the equations \eqref{eq:firstOrder} and \eqref{eq:secondOrder} must hold. Thus equation \eqref{eq:vanishingPnL} must hold. Since $q^i=-1$ we can collapse the telescoping sum in $V^0$ to obtain
equation \eqref{eq:replicationPathwise}
\end{proof}

The argument of the corollary can be generalised straightforwardly to prove
the convergence of the gamma-hedging strategy for European derivatives in any diffusion models which are sufficiently regular for us to be able to establish the necessary smoothness of the prices. One simply replaces the Black--Scholes PDE with the Feynman--Kac PDE. However, our next corollary is specific to the Black--Scholes model. It
shows that the Black--Scholes gamma-hedging strategy converges so long as the underlying asset and the implied volatility process are sufficiently regular.

\begin{corollary}
Let $X^2 \in C([0,T]; \R_{\geq0})$ be a path with vanishing $p_2$\textsuperscript{th} variation along $\pi$. Let $\sigma \in C([0,T]; \R_{> 0})$ be a path of vanishing $p_1$\textsuperscript{th} variation along $\pi$. Suppose $p_1\leq 2$, $p_2\leq 3$ and $\frac{1}{p_1}+\frac{1}{p_2} \geq 1$. Let $f^i:[0,T]\to \R$ ($i=0,\ldots,n)$ be measurable payoff functions for European options with maturity $T$. Suppose that $F^i(t,\sigma,s):=BS(f^i, T, \sigma, t, s)<\infty$.
If $q^i$ ($i=0,\ldots, n$) are bounded functions $q^i:[0,T]\to \R$ satisfying
\begin{equation}
\sum_{i=0}^n q^i_t \frac{\partial F^i}{\partial s} (t,\sigma_t,X^2_t) = 0
\label{eq:deltaHedge2}
\end{equation}
\begin{equation}
\sum_{i=0}^n q^i_t \frac{\partial^2 F^i}{\partial s^2} (t,\sigma_t, X^2_t) = 0
\label{eq:gammaHedge2}
\end{equation}
with $q^0=-1$ then
\begin{equation*}
f^0(X_T) = F^0(0,\sigma(0), X^2(0)) + \sum_{i=1}^n  q^i_u( F^i(v,\sigma_v, X^2_v)-F^i(u,\sigma_u, X^2_u)).
\end{equation*}
\end{corollary}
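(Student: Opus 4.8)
```latex
The plan is to deduce this second corollary directly from Theorem~\ref{thm:mainTheorem} by fitting the Black--Scholes pricing functions into the abstract framework, now treating the implied volatility as an extra component of the slow path rather than as a fixed parameter. First I would set up the state spaces: take ${\cal V}^1 = \R^2$ with coordinates $(t,\sigma)$ and ${\cal V}^2 = \R$ with coordinate $s$, and define the combined path $X^1_t = (t, \sigma_t)$ together with $X^2_t$ the stock price. Since $\sigma$ has vanishing $p_1$\textsuperscript{th} variation with $p_1 \leq 2$ and the deterministic time component $t$ trivially has vanishing variation of any order $\geq 1$, the path $X^1$ has vanishing $p_1$\textsuperscript{th} variation along $\pi$, while $X^2$ has vanishing $p_2$\textsuperscript{th} variation with $p_2 \leq 3$. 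The constraint $\frac{1}{p_1}+\frac{1}{p_2}\geq 1$ is exactly the hypothesis carried over from the theorem, so the admissibility conditions on the exponents are met verbatim.

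The next step is to verify the smoothness and the two algebraic hedging identities. As in the proof of Corollary~\ref{cor:gammaHedging}, the pricing-kernel representation shows each $F^i(t,\sigma,s)$ is three-times continuously differentiable in $(t,\sigma,s)$ for $t\in(0,T)$, so the regularity hypothesis of the theorem holds on a neighbourhood of the path. I would then interpret the two displayed hedging conditions \eqref{eq:deltaHedge2} and \eqref{eq:gammaHedge2} as the instances of \eqref{eq:firstOrder} and \eqref{eq:secondOrder} corresponding to the $s$-derivatives. The content of \eqref{eq:secondOrder} is precisely the vanishing of the second $s$-derivative combination \eqref{eq:gammaHedge2}, since ${\cal V}^2$ is one-dimensional. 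For \eqref{eq:firstOrder} one must check the first-order conditions in \emph{every} direction of both ${\cal V}^1$ and ${\cal V}^2$: the $s$-direction is supplied by \eqref{eq:deltaHedge2}, and the remaining directions are the $t$- and $\sigma$-derivatives.

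Here the main obstacle appears, and it is exactly the dimension-counting argument sketched for the first corollary, but now with one extra variable. The first-order condition along the $\sigma$-direction, $\sum_i q^i_t \,\partial_\sigma F^i = 0$, and along the $t$-direction, $\sum_i q^i_t \,\partial_t F^i = 0$, are not among the imposed hedging equations, so they must be recovered from the structure of Black--Scholes prices. I would exploit the two classical identities satisfied by every Black--Scholes price: the Black--Scholes PDE, $\partial_t F^i + \tfrac12 \sigma^2 s^2 \partial_{ss} F^i = 0$, and the relation between vega and gamma, $\partial_\sigma F^i = \sigma (T-t)\, s^2\, \partial_{ss} F^i$ (both following from the pricing kernel and integration by parts). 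Summing each against $q^i_t$ and applying the already-established gamma condition \eqref{eq:gammaHedge2} forces both the $t$- and $\sigma$-first-order sums to vanish. This verifies the full set of hypotheses \eqref{eq:firstOrder}--\eqref{eq:secondOrder}, so Theorem~\ref{thm:mainTheorem} yields the vanishing of the profit-and-loss sum; collapsing the telescoping $i=0$ term using $q^0=-1$ then gives the stated replication identity.
```
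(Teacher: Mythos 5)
Your proposal is correct and follows essentially the same route as the paper's proof: define $X^1_t=(t,\sigma_t)$, invoke the vega--gamma identity $\partial_\sigma F^i=\sigma(T-t)s^2\,\partial_{ss}F^i$ together with the Black--Scholes PDE, contract against $q^i_t$ and use the gamma condition \eqref{eq:gammaHedge2} to obtain the missing first-order conditions in the $t$- and $\sigma$-directions, so that Theorem~\ref{thm:mainTheorem} applies and the telescoping with $q^0=-1$ gives the replication identity --- your explicit contraction merely spells out the linear algebra the paper compresses into ``dimension counting.'' One minor quibble (which the paper also glosses over): the time component does \emph{not} have vanishing $1$-variation (its $1$-variation equals $T$), so when $p_1=1$ you should first replace $p_1$ by some $p_1'\in(1,\,p_2/(p_2-1)]$ --- which exists since $p_2\leq 3$ and is legitimate because vanishing $p$\textsuperscript{th} variation implies vanishing $q$\textsuperscript{th} variation for $q>p$ --- before asserting that $X^1=(t,\sigma_t)$ has vanishing $p_1$\textsuperscript{th} variation.
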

\begin{proof}
Define $X^1_t=(t,\sigma_t)$. The price of European derivatives in the Black--Scholes model
with maturity $T$ satisfy the equation
\[
\sigma(T-t) S^2 \frac{\partial^2 F^i}{\partial S^2} (t,\sigma, S) = \frac{\partial F^i}{\partial \sigma} (t,\sigma, S)
\]
in addition to the Black--Scholes PDE. This is proved by showing that essentially the same PDE holds for the pricing kernel, see \cite{armstrongIonescu}.
Together with equations \eqref{eq:deltaHedge2} and \eqref{eq:gammaHedge2} this gives us 4 linearly independent equations in the four partial derivatives featuring in equations \eqref{eq:firstOrder} and \eqref{eq:secondOrder}. It follows that the last two equations hold, and our result follows.
\end{proof}

It is natural to ask when such $\{q^i\}_{i=0,\cdots,n}$ exist. If $f^1(X^2)=X^2$ (representing the case when the underlying is traded) and if $f^2$ is convex and non-linear (which is the case for standard put and call options), then we will have $\nabla^2\nabla^2 F^2(t,X^2)>0$ whenever $t<T$ (see \cite{armstrongIonescu} for the proof). Since $\nabla^2\nabla^2 F^1(t,X^2)=0$, the vectors $(\nabla^2 F^i, \nabla^2 \nabla^2 F^i)(t,X^2)$ will be linearly independent for all $t<T$. This means that we can always find, possibly unbounded, $q^i$ satisfying equations 
\eqref{eq:deltaHedge2} and \eqref{eq:gammaHedge2}.
As a result if the stock and a single put or call are traded at the Black--Scholes price we can replicate any option $f^0$ using
the gamma-hedging strategy so long as $(f^0)^\prime(X_T)$ and $(f^0)^{\prime\prime}(X_T)$ both exist.

Since in the Black-Scholes model $\mathbb P(X_T=K)=0$ for any strike price $K$, the assumption $(f^0)^{\prime\prime}(X_T)\neq0$ should not be viewed as very restrictive when compared to the assumptions of probabilistic approaches. Moreover, 
since any continuous payoff can be approximated from below or above by smooth functions to arbitrary accuracy, one can always super- or sub-hedge any payoff function $f^0$ without restriction on the final value $X_T$.

\section{Integrating against solutions of ODEs}

Given a path $X:[0,T]\to {\cal V}$, we define the modulus of continuity
\[
\osc(X,\delta):=
\sup \{ \|X_s-X_t\| \mid s,t \in [0,T], |s-t|<\delta \}.
\]
Note that
$\osc(X, \delta) \to 0$ as $\delta \to 0$ iff $X$ is uniformly continuous.

It is known that given a piecewise continuous function $f:[0,T]\to{\cal V}^*$ and a function of bounded $1$-variation $g:[0,T] \to {\cal V}$, the Riemann--Stieltjes
integral $\int_0^t f_u \, dg_u$
is well-defined for all $t \in [0,T]$.


\begin{lemma}
\label{lemma:riemannSums}
Suppose that an ordinary differential equation of the form $\sum_i \int_0^t f^i_u \, d g^i_u = 0$ for all $t \in [0,T]$ holds and that $y:[0,T]\to \R$ is bounded, then
\[
\left|
\sum_{ [u,v] \in \pi^N}
\sum_{i=1}^n 
y_u f^i_u (g^i_v - g^i_u)
\right|
\leq
\| y \|_\infty
\sum_{i=1}^n \osc( f^i, \| \pi^N\|) \|g^i\|_{1\var}.
\]
The right-hand side tends to zero
as $N \to \infty$.
\end{lemma}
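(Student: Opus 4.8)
The plan is to exploit the hypothesis $\sum_i \int_0^t f^i_u\,dg^i_u=0$ locally, on each partition interval, and to recognise the summand $y_u f^i_u(g^i_v-g^i_u)$ as a left-point Riemann--Stieltjes approximation to an integral that the hypothesis forces to vanish. First I would observe that, since the integral $\int_0^t$ is additive in its upper limit, the global identity restricts to each step $[u,v]\in\pi^N$ as
\[
\sum_{i=1}^n \int_u^v f^i_w\,dg^i_w
= \sum_{i=1}^n \int_0^v f^i_w\,dg^i_w - \sum_{i=1}^n \int_0^u f^i_w\,dg^i_w = 0.
\]

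Next I would rewrite each term of the Riemann sum as an integral with a frozen integrand: because $f^i_u$ is constant in $w$, we have $f^i_u(g^i_v-g^i_u)=\int_u^v f^i_u\,dg^i_w$. Subtracting the vanishing quantity above, interval by interval, gives the key identity
\[
\sum_{i=1}^n f^i_u(g^i_v-g^i_u)
= \sum_{i=1}^n \int_u^v \bigl(f^i_u - f^i_w\bigr)\,dg^i_w.
\]
This is the crucial cancellation: the hypothesis converts the crude left-point approximation error into an honest integral of the increment $f^i_u - f^i_w$, which is small because $w$ ranges over an interval of length at most $\|\pi^N\|$.

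I would then estimate term by term. Pulling out $y_u$ and using the standard bound $\bigl|\int_u^v h_w\,dg^i_w\bigr| \le \sup_{w\in[u,v]}|h_w|\,\|g^i\|_{1\var;[u,v]}$ for Riemann--Stieltjes integrals against a bounded-variation integrator, together with $|f^i_u - f^i_w|\le \osc(f^i,\|\pi^N\|)$ for all $w\in[u,v]$, yields
\[
\left|\sum_{[u,v]\in\pi^N}\sum_{i=1}^n y_u f^i_u(g^i_v-g^i_u)\right|
\le \|y\|_\infty \sum_{i=1}^n \osc(f^i,\|\pi^N\|)\sum_{[u,v]\in\pi^N}\|g^i\|_{1\var;[u,v]}.
\]
Finally, additivity of the total variation over the adjacent subintervals of $\pi^N$ collapses the inner sum to $\|g^i\|_{1\var}=\|g^i\|_{1\var;[0,T]}$, which is exactly the stated bound. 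For the convergence claim I would invoke uniform continuity: each $f^i$ is continuous on the compact interval $[0,T]$, hence uniformly continuous, so $\osc(f^i,\|\pi^N\|)\to 0$ as $\|\pi^N\|\to 0$, while each $\|g^i\|_{1\var}$ is a fixed finite constant; the right-hand side therefore tends to zero.

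The main obstacle I anticipate is bookkeeping rather than depth: one must justify the manipulations of the Riemann--Stieltjes integral --- its additivity in the domain, the elementary estimate by sup times variation, and the additivity of total variation --- under the stated regularity ($f^i$ piecewise continuous, $g^i$ of bounded $1$-variation). These are standard once the integral is known to exist, which is precisely the fact recalled just before the lemma. The only genuine subtlety is that the convergence to zero requires $\osc(f^i,\cdot)\to 0$, i.e. genuine (not merely piecewise) continuity of the integrands on $[0,T]$; this is why the modulus of continuity was introduced above, and why uniform continuity on the compact interval does the remaining work.
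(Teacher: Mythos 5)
Your proposal is correct and follows essentially the same route as the paper's proof: subtract the interval-wise vanishing integral $\sum_i \int_u^v f^i_s\,dg^i_s$, rewrite the left-point error as $\int_u^v (f^i_u - f^i_s)\,dg^i_s$, bound it via the standard estimate $\bigl|\int_u^v h_s\,dg_s\bigr| \le \sup_{s\in[u,v]}|h_s|\,\|g\|_{1\var;[u,v]}$, and collapse by additivity of the $1$-variation. Your closing observation that the convergence claim needs $\osc(f^i,\cdot)\to 0$, i.e.\ genuine uniform continuity rather than mere piecewise continuity, is a fair and accurate refinement of the paper's hypotheses.
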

\begin{proof}
Recall \cite[Proposition 2.2]{frizVictoir} that if $f$ is continuous and 
$g$ has finite $1$-variation then we have the following estimate
for the Riemann-Stieltjes integral
\[
\int_u^v f_s \, dg_s \leq (\sup_{s \in [u.v]} |f_s| ) \| g \|_{1\var;[u,v]}.
\]
We now compute that
\begin{align*}
\left|
\sum_{ [u,v] \in \pi^N}
\sum_{i=1}^n 
y_u f^i_u (g^i_v - g^i_u)
\right|
&=
\left|
\sum_{ [u,v] \in \pi^N}
\sum_{i=1}^n 
\left(
y_u f^i_u (g^i_v - g^i_u)
- y_u \int_u^v f^i_u \, d g^i_u 
\right)
\right| \\
&\leq
\| y \|_\infty
\sum_{ [u,v] \in \pi^N}
\sum_{i=1}^n 
\left|
f^i_u (g^i_v - g^i_u)
- \int_u^v f^i_s \, d g^i_s 
\right| \\
&\leq
\| y \|_\infty
\sum_{ [u,v] \in \pi^N}
\sum_{i=1}^n 
\left|
\int_u^v (f^i_u-f^i_s) \, d g^i_s 
\right| \\
&\leq
\| y \|_\infty
\sum_{ [u,v] \in \pi^N}
\sum_{i=1}^n 
\osc(f^i, \| \pi^N \|) \| g^i \|_{1\var;[u,v]} \\
&=
\| y \|_\infty
\sum_{i=1}^n 
\osc(f^i, \| \pi^N \|) \| g^i \|_{1\var}.
\end{align*}
\end{proof}
We will use the above result in the following sections. 
\section{Paths with finite quadratic variation}

\begin{definition}
\label{def:quadraticVariation}
A continuous path $X \in C_0([0,T], {\cal V})$ is said to have a finite \emph{quadratic-variation} along a sequence of partitions $\pi$ with mesh tending to zero if the sequence of ${\cal V}^{\otimes 2}$-valued measures 
\[
\mu^n := \sum_{[u,v] \in \pi_n} \delta(\cdot - u) (X_v - X_u)^{\otimes 2}
\]
converges weakly to a symmetric  measure $\mu$ without atoms. In that case we write $[X]_t := \mu([0,t])$ for $t \in [0,T]$, and we call $[X]$ the \emph{quadratic-variation} of $X$ along the partition sequence $\pi$.
\end{definition}

The following lemma gives a simple characterization of this property.
\begin{lemma}
Let $X \in C([0, T],{\cal V})$. $X$ has quadratic variation along $\pi$ if and only if there exists a continuous
function $[X]$ such that
\begin{align}\label{def: p-th var (cont)}
   \forall t \in [0, T],
\sum_{\substack{[u,v]\in \pi_n:\\
u\leq t}}
\|X_v - X_u\|^2 \xrightarrow[]{n\to \infty} [X]_t. 
\end{align} 
If this property holds, then the convergence in \eqref{def: p-th var (cont)} is uniform.    
\end{lemma}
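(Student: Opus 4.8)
The plan is to prove the two implications separately, with the equivalence of the measure-theoretic definition (Definition~\ref{def:quadraticVariation}) and the pointwise-convergence characterization \eqref{def: p-th var (cont)} resting on the relationship between weak convergence of the measures $\mu^n$ and convergence of their cumulative distribution functions. The key observation is that for the diagonal (trace) part of the measures, $\mu^n([0,t])$ evaluated against the identity essentially recovers $\sum_{[u,v]\in\pi_n,\, u\le t}\|X_v-X_u\|^2$, so the two notions of convergence are linked through a Helly--Bray type argument for distribution functions.

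First I would prove the forward direction: assume $X$ has quadratic variation in the sense of Definition~\ref{def:quadraticVariation}, so $\mu^n \rightharpoonup \mu$ weakly with $\mu$ atomless and symmetric. Define $[X]_t := \mathrm{tr}\,\mu([0,t])$ (tracing over the ${\cal V}^{\otimes 2}$ factor), which is continuous precisely because $\mu$ has no atoms and is finite. The cumulative sums $S^n_t := \sum_{u\le t}\|X_v-X_u\|^2 = \mathrm{tr}\,\mu^n([0,t])$ are the distribution functions of the trace measures $\mathrm{tr}\,\mu^n$. Weak convergence of measures implies convergence of distribution functions at every continuity point of the limiting distribution function; since $[X]$ is continuous everywhere, $S^n_t \to [X]_t$ for all $t$, giving \eqref{def: p-th var (cont)}.

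Next I would prove the converse: assume pointwise convergence \eqref{def: p-th var (cont)} to a continuous limit $[X]$. The associated Lebesgue--Stieltjes measures $d S^n \to d[X]$ weakly for the trace part by the standard correspondence between convergence of continuous distribution functions and weak convergence of their increments. To upgrade this to weak convergence of the full ${\cal V}^{\otimes 2}$-valued measures $\mu^n$, I would test against an arbitrary continuous $\phi$ and use a polarization/component argument: each tensor component's cumulative distribution can be controlled by the diagonal entries via Cauchy--Schwarz, so tightness and convergence of the off-diagonal components follow from control of the trace. The limit measure $\mu$ is symmetric because each $(X_v-X_u)^{\otimes 2}$ is symmetric, and it is atomless because $[X]$ is continuous.

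Finally I would establish the uniform-convergence claim. This is the natural analogue of Dini's theorem: the functions $S^n$ are monotone nondecreasing in $t$, and they converge pointwise to the continuous (hence uniformly continuous on $[0,T]$) limit $[X]$. Pointwise convergence of monotone functions to a continuous limit on a compact interval is automatically uniform, by the classical P\'olya--Dini argument: partition $[0,T]$ finely enough that $[X]$ oscillates by less than $\epsilon$ on each subinterval, use monotonicity of $S^n$ to sandwich $S^n_t$ between its values at the finitely many partition endpoints, and invoke pointwise convergence at those finitely many points. I expect the main obstacle to be the converse direction, specifically recovering weak convergence of the \emph{full} tensor-valued measures $\mu^n$ from convergence of the scalar trace alone; handling the off-diagonal components cleanly --- rather than merely the diagonal --- is where the vector-valued setting demands genuine care, and the polarization bound is the crucial technical ingredient there.
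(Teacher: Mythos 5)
The paper states this lemma without proof, so your attempt has to be judged on its own merits. Your forward direction and your uniformity argument are sound: since each $\mu^n$ takes positive-semidefinite values in ${\cal V}^{\otimes 2}$, the trace pairing $\mathrm{tr}\,\mu^n$ is a genuine positive measure, the portmanteau theorem gives $\mathrm{tr}\,\mu^n([0,t])\to\mathrm{tr}\,\mu([0,t])$ for every $t$ because the atomless limit makes every $t$ a continuity point, and the P\'olya-type argument (functions monotone in $t$ converging pointwise on a compact interval to a continuous limit) correctly upgrades pointwise to uniform convergence.

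The genuine gap is in your converse, precisely at the step you yourself flag as delicate. Writing $\Delta X^i = X^i_v - X^i_u$ and $S^n_t = \sum_{u\le t}\|X_v-X_u\|^2$, Cauchy--Schwarz gives, for $s<t$, the bound $\bigl|\sum_{s<u\le t}\Delta X^i\,\Delta X^j\bigr| \le \tfrac{1}{2}\bigl(S^n_t - S^n_s\bigr)$; this yields uniform boundedness and asymptotic equicontinuity of the off-diagonal cumulative functions, hence \emph{precompactness} (Helly/Arzel\`a--Ascoli) --- but it cannot yield convergence, because a bound never identifies the limit. Indeed, convergence of the trace $S^n_t=\sum_i\sum_{u\le t}(\Delta X^i)^2$ does not even determine the individual diagonal sums $\sum_{u\le t}(\Delta X^i)^2$: two components can oscillate in compensating fashion along even and odd $n$, so distinct subsequential weak-$*$ limits of $\mu^n$ are possible and weak convergence of the full sequence fails. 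Polarization is the right idea, but it requires as \emph{input} the convergence of $\sum_{u\le t}\langle\ell,X_v-X_u\rangle^2$ for sufficiently many functionals $\ell$ (F\"ollmer's classical condition that each $X^i$ and each $X^i+X^j$ has quadratic variation along $\pi$), which is strictly stronger than convergence of the trace once $\dim{\cal V}\ge 2$. Your argument is complete in dimension one; in higher dimensions the converse needs either this stronger hypothesis or the tensor-valued reading of \eqref{def: p-th var (cont)}, namely $\sum_{u\le t}(X_v-X_u)^{\otimes 2}\to[X]_t\in{\cal V}^{\otimes 2}$ pointwise with continuous limit, under which your monotone-distribution-function argument does apply componentwise after polarizing.
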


The point of Definition \ref{def:quadraticVariation} is that it ensures that if $Y^*_u$ is a piecewise continuous path in ${\cal V}^*\otimes {\cal V}^*$ then we will 
have
\begin{equation}
\lim_{n \to \infty} \sum_{[u,v]\in \pi_n} Y^*_u (X_v-X_u)^{\otimes 2}
= \int_0^T Y^*_u d[X]_u = \lim_{n \to \infty} \sum_{[u,v]\in\pi_n} Y^*_u ([X]_v - [X]_u).
\label{eq:quadraticVariationEquivalence}
\end{equation}
This is proved in \cite[Proposition~5.3.5]{contBook2016}.

\begin{note}
To translate between the notation of \cite[Proposition~5.3.5]{contBook2016} and our notation, one first chooses a basis $\{e_1, \ldots, e_n\}$ for $V$ with its corresponding dual basis $\{e^1, \ldots, e^n\}$. An arbitrary element $a \in V^* \otimes V^*$ may be written as $\sum_{i,j} a_{ij} e^i \otimes e^j$ and an element  $b \in V \otimes V$ may be written as $\sum_{i,j} b^{ij} e_i e_j$.
The value of the pairing $a \cdot b = \sum_{i,j} a_{ij} b^{ij} = \mathrm{tr} (A B^\top)$ where $A$ is the matrix with components $a_{ij}$ and $B$ is the matrix with components $b^{ij}$. The transpose is unnecessary if $b$ or $a$ is symmetric.
\end{note}


\begin{theorem}
Let $\pi$ be a partition with mesh tending to 0.
Suppose that $X \in C([0,T], {\cal V})$ has finite quadratic variation
along $\pi$
and that ${\cal V}$ is a finite-dimensional vector space. Let $F^i$ for $i=0 ,\ldots, n$ are $C^3$ be functions mapping $[0,T]\times {\cal V}$ to $\R$ and let $q^i:[0,T]\to \R$ be piecewise continuous.
Suppose that all the $F^i$ satisfy the equation
\begin{equation}
\frac{\partial F^i(t,X_t)}{\partial t} dt + \frac{1}{2}(\nabla \nabla F^i(t,X_t)) d[X]_t
= A(t,X) \nabla F^i(t,X_t) dt
\label{eq:blackScholesQuadratic}
\end{equation}
for some path-dependent ${\cal V}^*$-valued function $A$.
Suppose that for all $t \in [0,T)$
\begin{equation}
\sum_{i=0}^n q^i_t (\nabla F^i)(t,X_t) = 0.
\label{eq:deltaHedgeQuadratic}
\end{equation}
Then
\begin{equation}
\lim_{N\to \infty} \sum_{[u,v] \in \pi_N} \sum_{i=0}^n  q^i_u( F^i(v, X_v)-F^i(u, X_u)) = 0.
\end{equation}
\end{theorem}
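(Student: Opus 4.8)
The plan is to mimic the structure of the proof of Theorem~\ref{thm:mainTheorem}: expand each increment $F^i(v,X_v)-F^i(u,X_u)$ by Taylor's theorem to second order in the space variable and first order in time, sum over the partition, and show that every resulting term vanishes in the limit. The key difference from the first theorem is that here $X$ only has \emph{finite} quadratic variation (not vanishing $p$\textsuperscript{th} variation for $p<2$ in the space direction), so the second-order spatial term does not vanish on its own; instead it must be combined with the time-derivative term using the PDE relation \eqref{eq:blackScholesQuadratic}. Concretely, writing $\Delta u = v-u$ and $\Delta X = X_v - X_u$, Taylor's theorem gives
\begin{equation*}
F^i(v,X_v)-F^i(u,X_u) = \frac{\partial F^i}{\partial t}(u,X_u)\,\Delta u + \nabla F^i(u,X_u)\,\Delta X + \tfrac12 \nabla\nabla F^i(u,X_u)(\Delta X)^{\otimes 2} + R^i_{[u,v]},
\end{equation*}
where $R^i_{[u,v]}$ collects the higher-order remainder terms whose total contribution vanishes by the $C^3$ bounds together with $\osc(X,\|\pi_N\|)\to 0$ and the boundedness of $\sum \|\Delta X\|^2$.

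First I would dispose of the first-order term $\sum_{[u,v]}\sum_i q^i_u \nabla F^i(u,X_u)\Delta X$: this is identically zero before taking any limit, because the delta-hedging constraint \eqref{eq:deltaHedgeQuadratic} forces $\sum_i q^i_u \nabla F^i(u,X_u)=0$ at every partition point $u$. Next I would handle the remainder terms: the purely second-order-in-time term is $O(\|\pi_N\|)$ summed over $O(1/\|\pi_N\|)$ intervals and hence controlled, the mixed $\Delta u\,\Delta X$ term vanishes by Cauchy--Schwarz against the bounded quadratic variation, and the genuinely third-order terms vanish exactly as in Theorem~\ref{thm:mainTheorem}. The heart of the argument is the remaining two terms, the time term and the second-order spatial term, which I would group together as
\begin{equation*}
\sum_{[u,v]\in\pi_N}\sum_{i=0}^n q^i_u\left( \frac{\partial F^i}{\partial t}(u,X_u)\,\Delta u + \tfrac12 \nabla\nabla F^i(u,X_u)(\Delta X)^{\otimes 2}\right).
\end{equation*}

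Here I would invoke \eqref{eq:quadraticVariationEquivalence} to replace $(\Delta X)^{\otimes 2}$ by the increment $[X]_v-[X]_u$ in the limit, so that the spatial term becomes a Riemann--Stieltjes-type sum converging to $\int_0^T \tfrac12 \sum_i q^i_u \nabla\nabla F^i(u,X_u)\,d[X]_u$, while the time term converges to $\int_0^T \sum_i q^i_u \tfrac{\partial F^i}{\partial t}(u,X_u)\,du$. At this stage the PDE \eqref{eq:blackScholesQuadratic} is the crucial input: substituting it rewrites the combined integrand as $\sum_i q^i_u\, A(u,X)\nabla F^i(u,X_u)\,du$, and since $A$ does not depend on $i$ it factors out to give $A(u,X)\bigl(\sum_i q^i_u \nabla F^i(u,X_u)\bigr)\,du$, which is zero by the delta-hedging constraint \eqref{eq:deltaHedgeQuadratic}. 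The main obstacle I anticipate is justifying the passage from the discrete sum with $(\Delta X)^{\otimes 2}$ to the limiting integral against $d[X]$ with sufficient rigour: one must check that the integrand $Y^*_u := \tfrac12\sum_i q^i_u \nabla\nabla F^i(u,X_u)$ is piecewise continuous (which follows from the $C^3$ hypothesis on $F^i$, continuity of $X$, and piecewise continuity of $q^i$) so that \eqref{eq:quadraticVariationEquivalence} genuinely applies, and that the evaluation at the left endpoint $u$ rather than at an intermediate Taylor point $\xi$ does not affect the limit, which again follows from uniform continuity of the second derivatives along the compact path and $\osc(X,\|\pi_N\|)\to0$.
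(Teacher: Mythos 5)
Your proposal is correct and takes essentially the same route as the paper: a single Taylor expansion, the first-order spatial term killed by \eqref{eq:deltaHedgeQuadratic}, the crucial pairing of the time term with the second-order spatial term via \eqref{eq:quadraticVariationEquivalence} and the PDE \eqref{eq:blackScholesQuadratic}, and the delta constraint to finish --- the only cosmetic difference being that the paper stays at the level of Riemann sums via Lemma \ref{lemma:riemannSums}, whereas you pass to the limiting integrals before substituting the PDE. One small slip worth fixing: the pure $(\Delta u)^2$ term is not ``$O(\|\pi_N\|)$ summed over $O(1/\|\pi_N\|)$ intervals'' (that heuristic gives $O(1)$, and the mesh bounds the largest step, not the number of intervals); the correct estimate is $\sum_{[u,v]\in\pi_N}(v-u)^2\leq\|\pi_N\|\,T\to0$.
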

\begin{proof}
Take $X^1=t$ and $X^2=X$ and apply the argument of Theorem \ref{thm:mainTheorem} with ${\cal V}^1=\R$ and ${\cal V}^2={\cal V}$.
Equation \eqref{eq:taylorExpansion} still holds. Let us write $T_{a_1,a_2}$ for the term in equation 
$\eqref{eq:taylorExpansion}$ corresponding to given values
of $a_1$ and $a_2$, that is:
\begin{align}
T_{a_1,a_2}:=\lim_{N\to \infty} \sum_{[u,v] \in \pi_N} \sum_{i=0}^n
 \frac{1}{a_1! \,a_2!} q^i_u
(\nabla^1_{X^1_v-X^1_u})^{a_1} (\nabla^2_{X^2_v-X^2_u})^{a_2}  F^i )(\xi^{u,N}_{a_1+a_2}). 
\label{eq:termsT}
\end{align}
Using equation \eqref{eq:quadraticVariationEquivalence}
we have that
\begin{align*}
T_{0,2}=
\lim_{N \to \infty}
\sum_{[u,v] \in \pi_N} \sum_{i=0}^n
 \frac{1}{2} q^i_u
(\nabla \nabla F^i) ([X]_v-[X]_u). 
\end{align*}
Using equation \eqref{eq:blackScholesQuadratic}
and Lemma \ref{lemma:riemannSums}
we
then find
\begin{align}
T_{1,0}+T_{0,2}=
\lim_{N \to \infty}
\sum_{[u,v] \in \pi_N} \sum_{i=0}^n
  q^i_u A(u,X) \nabla F^i(u,X_u) (v-u).
\end{align}
But this now vanishes by equation \eqref{eq:deltaHedgeQuadratic}.
All other terms in equation \eqref{eq:taylorExpansion} vanish by our assumptions and the result follows.
\end{proof}

The financial interpretation of this Theorem is that classical delta-hedging
allows one to replicate European options in a diffusion model almost surely.

Unlike our gamma-hedging result, the conditions of this Theorem are met by puts and calls in the Black-Scholes model when the underlying is traded without imposing any condition on the terminal value $X_T$.

\section{Asian options}

In this section, we will illustrate how our results can be applied to replicate Asian options. For simplicity, we only consider the Black--Scholes case.

\begin{theorem}
Let $\pi$ be a partition sequence with mesh tending to 0. Suppose that $X \in C([0,T], \R)$
has finite $p^{th}$ variation along
$\pi$ for some $p<3$.
Let $F^i$ for $i=0 \ldots n$ are $C^3$ be functions mapping $[0,T]\times \R \times \R$ to $\R$ and let $q^i:[0,T]\to \R$ be piecewise continuous.
Define
\begin{equation}
I_t = \int_0^t X_u \, du.
\label{eq:defI}
\end{equation}
Suppose that all the $F^i$ satisfy the equations
\begin{equation}
\frac{\partial F^i(t,I_t,X_t)}{\partial t} + X_t \frac{\partial F^i(t,I_t,X_t)}{\partial I} + \frac{1}{2} \sigma^2 X_t^2 \frac{\partial^2 F^i(t,I_t,X_t)}{\partial X^2}
= 0, \quad\forall i, \forall t\in [0,T]
\label{eq:blackScholesAsian}
\end{equation}
and,
\begin{equation}
\sum_{i=0}^n q^i_t \frac{\partial F^i}{\partial X}(t,I_t,X_t) = 0\quad  \forall i, \forall t \in [0,T].
\label{eq:deltaHedgeAsian}
\end{equation}
If in addition either
\begin{enumerate}[(i)]
\item the portfolio is gamma-neutral. i.e., 
\begin{equation}
\sum_{i=0}^n q^i_t \frac{\partial^2 F^i}{\partial X^2}(t,I_t,X_t) = 0,\quad \forall t\in[0,T]
\label{eq:gammaHedgeAsian}
\end{equation}
or,
\item the quadratic variation of $X$
satisfies
\begin{equation}
[X]_t = \int_0^t \sigma^2 X^2_u \, du
\end{equation}
\end{enumerate}
then for any $t\in [0,T]$;
\begin{equation}
\lim_{N\to \infty} \sum_{[u,v] \in \pi_N\cap[0,T]} \sum_{i=0}^n  q^i_u( F^i(v, X_v)-F^i(u, X_u)) = 0.
\end{equation}
\end{theorem}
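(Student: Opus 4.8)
The plan is to run the Taylor-expansion argument of Theorem~\ref{thm:mainTheorem} with the ``slow'' variable $X^1_t := (t, I_t) \in \R^2$ collecting time and the running integral \eqref{eq:defI}, and the ``fast'' variable $X^2_t := X_t \in \R$. Since $X$ is continuous on $[0,T]$, the path $I_t = \int_0^t X_u\,du$ is continuously differentiable, so $X^1$ has finite $1$-variation and hence vanishing $p_1$-th variation for every $p_1 > 1$; choosing $p_1$ close to $1$ and $p_2 = p < 3$ satisfies the hypotheses $p_1 \le 2$, $p_2 \le 3$, $\tfrac{1}{p_1}+\tfrac{1}{p_2} \ge 1$ of Theorem~\ref{thm:mainTheorem}. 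With this identification the Taylor expansion \eqref{eq:taylorExpansion} holds, and I would classify the resulting terms $T_{a_1,a_2}$ exactly as in that proof.

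First I would dispose of every term that vanishes on its own. All order-three terms ($a_1+a_2=3$) vanish: the pure term $T_{0,3}$ because $X$ has vanishing third variation (which follows from the $p$-th variation hypothesis with $p<3$ together with uniform continuity), and every mixed order-three term because it carries at least one increment of the finite-$1$-variation path $X^1$. For the same reason $T_{2,0}$ vanishes (finite-variation paths have vanishing quadratic variation) and $T_{1,1}$ vanishes by H\"older's inequality, just as in Theorem~\ref{thm:mainTheorem}. The first-order fast term $T_{0,1}$ vanishes identically: for each $[u,v]$ the inner sum factors as $(X_v-X_u)\sum_i q^i_u \tfrac{\partial F^i}{\partial X}(u,I_u,X_u)$, which is zero by the delta-hedging condition \eqref{eq:deltaHedgeAsian}. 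This leaves only $T_{1,0}$ and $T_{0,2}$, and, as in the quadratic-variation theorem, these do not vanish separately but must be combined through the pricing PDE.

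The crux is the treatment of $T_{1,0}$, which is where the Asian structure enters. Writing out the first directional derivative in the $X^1$-direction, $T_{1,0}$ is the limit of $\sum_{[u,v]}\sum_i q^i_u\big[\tfrac{\partial F^i}{\partial t}(u,I_u,X_u)(v-u) + \tfrac{\partial F^i}{\partial I}(u,I_u,X_u)(I_v-I_u)\big]$. I would first replace the increment $I_v - I_u = \int_u^v X_s\,ds$ by $X_u(v-u)$: the error per interval is $\int_u^v (X_s - X_u)\,ds$, bounded by $\osc(X,\|\pi_N\|)(v-u)$, so after multiplying by the bounded coefficients and summing it is at most a constant times $\osc(X,\|\pi_N\|)\,T \to 0$ (equivalently, one may invoke Lemma~\ref{lemma:riemannSums}). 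Substituting the Asian Black--Scholes PDE \eqref{eq:blackScholesAsian} in the form $\tfrac{\partial F^i}{\partial t} + X_t\tfrac{\partial F^i}{\partial I} = -\tfrac12\sigma^2 X_t^2 \tfrac{\partial^2 F^i}{\partial X^2}$ then turns $T_{1,0}$ into $-\tfrac12\sigma^2 \lim_N \sum_{[u,v]}\sum_i q^i_u X_u^2 \tfrac{\partial^2 F^i}{\partial X^2}(u,I_u,X_u)(v-u)$.

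Finally I would conclude in each case. Under (i), gamma-neutrality \eqref{eq:gammaHedgeAsian} makes $\sum_i q^i_u \tfrac{\partial^2 F^i}{\partial X^2}(u,I_u,X_u)=0$ for every $u$, so both $T_{0,2}$ and the rewritten $T_{1,0}$ vanish term by term. Under (ii), the quadratic-variation equivalence \eqref{eq:quadraticVariationEquivalence} applied with the piecewise-continuous integrand $\tfrac12\sum_i q^i_u\tfrac{\partial^2 F^i}{\partial X^2}$ gives $T_{0,2} = \int_0^T \tfrac12\sum_i q^i_u\tfrac{\partial^2 F^i}{\partial X^2}\,d[X]_u = \int_0^T \tfrac12\sigma^2 X_u^2\sum_i q^i_u\tfrac{\partial^2 F^i}{\partial X^2}\,du$, using $d[X]_u = \sigma^2 X_u^2\,du$, while the rewritten $T_{1,0}$ converges to exactly the negative of this Riemann integral; hence $T_{1,0}+T_{0,2}=0$. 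In both cases all terms vanish and the claimed limit is zero. I expect the main obstacle to be the middle step: making rigorous the passage from the running-integral increment $I_v-I_u$ to the $dt$-contribution $X_u(v-u)$ and then matching it against the second-order term via the PDE, while keeping track of the fact that $q^i$ is only piecewise continuous so that the relevant Riemann sums still converge.
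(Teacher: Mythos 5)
Your proposal is correct and follows essentially the same route as the paper: the same choice $X^1=(t,I_t)$, $X^2=X$, the same use of Lemma~\ref{lemma:riemannSums} (equivalently the oscillation bound) to replace $I_v-I_u$ by $X_u(v-u)$, and the same combination of $T_{1,0}$ and $T_{0,2}$ via the Asian PDE, with cases (i) and (ii) handled exactly as in the paper's proof. Your write-up is in fact somewhat more explicit than the paper's (e.g.\ the choice of $p_1$ near $1$ for the finite-variation component, and the invocation of \eqref{eq:quadraticVariationEquivalence} in case (ii)), but there is no substantive difference.
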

\begin{proof}
To apply our argument to this case, we will take $X^1=(t,I)$ and $X^2=X$
and once again consider the terms
in equation \eqref{eq:taylorExpansion}.
We define the terms $T_{a_1,a_2}$ using
equation \eqref{eq:termsT} as before.
In this case
\begin{align*}
T_{1,0} &= 
\lim_{n \to \infty} \sum_{[u,v] \in \pi_n} 
\sum_{i=0}^n q^i_t
\nabla^1_{X^1_v-X^1_u} F^i \\
&= 
\lim_{n \to \infty} \sum_{[u,v] \in \pi_n} 
\sum_{i=0}^n q^i_t \left[
\frac{\partial F^i(u,I_u,X_u)}{\partial t}
(v-u)
+ \frac{\partial F^i(u,I_u,X_u)}{\partial I}(I_v-I_u) 
\right] \\
&=
\lim_{n \to \infty} \sum_{[u,v] \in \pi_n} 
\sum_{i=0}^n q^i_t \left[
\frac{\partial F^i(u,I_u,X_u)}{\partial t}
+ X_u \frac{\partial F^i(u,I_u,X_u)}{\partial I} 
\right] (v-u)
\end{align*}
by equation
\eqref{eq:defI} and Lemma \ref{lemma:riemannSums}.
Using equation \eqref{eq:blackScholesAsian} we find
that
\begin{align*}
T_{1,0} + T_{0,2} &= 
\lim_{n \to \infty} \sum_{[u,v] \in \pi_n} 
\sum_{i=0}^n q^i_t
\frac{1}{2}
\frac{\partial^2 F^i(u,I_u,X_u)}{\partial X^2}((X_v-X_u)^2 - \sigma^2 X_u^2 (v-u)).
\end{align*}
If either condition (i) or condition (ii) holds, this will vanish.
All other terms in equation
\eqref{eq:taylorExpansion}
vanish by our assumptions.
\end{proof}

\bibliographystyle{plain} 
\bibliography{bibliography}

\end{document}